\newtheorem{thm}{Theorem}[section]
\newtheorem{cor}[thm]{Corollary}
\newtheorem{lem}[thm]{Lemma}
\numberwithin{equation}{section}
\makeatletter \@addtoreset{equation}{section} \makeatother
\tikzstyle{every node}=[circle,inner sep=1pt,fill=white!60]
\tikzstyle{tn}=[shape=circle, draw, color=black!70]
\begin{document}
\begin{center}
{\large\bf
Identities on Factorial Grothendieck Polynomials}
 \end{center}

\begin{center}
{\small  Peter L. Guo$^1$ and Sophie C.C. Sun$^2$}

\vskip 3mm
Center for Combinatorics, LPMC\\
Nankai University\\
Tianjin 300071, P.R. China
\\[3mm]

\vskip 4mm

$^1$lguo@nankai.edu.cn,
 $^2$suncongcong@mail.nankai.edu.cn
\end{center}

\begin{abstract}
 Gustafson and Milne proved an  identity on the Schur function
indexed by a partition of the form  $(\lambda_1-n+k,\lambda_2-n+k,\ldots,\lambda_k-n+k)$.
On the other hand,
 Feh\'{e}r, N\'{e}methi and Rim\'{a}nyi found an identity
on the  Schur function
indexed by  a partition of the form $(m-k,\ldots,m-k, \lambda_1,\ldots,\lambda_k)$.
Feh\'{e}r, N\'{e}methi and Rim\'{a}nyi gave a geometric explanation    of
their identity, and they raised the question of finding a combinatorial proof.
In this paper, we establish  a Gustafson-Milne type identity as well
as a  Feh\'{e}r-N\'{e}methi-Rim\'{a}nyi type identity  for
 factorial Grothendieck polynomials.
Specializing a factorial Grothendieck polynomial
to a Schur function, we obtain a combinatorial proof
of the Feh\'{e}r-N\'{e}methi-Rim\'{a}nyi identity.
\end{abstract}

\section{Introduction}

Throughout this paper, we let $n$ be a positive integer.
We shall write  $[n]=\{1,2,\ldots,n\}$ and
use $\binom{[n]}{k}$ to represent the set of $k$-subsets of $[n]$.
For a $k$-subset $S$ of $[n]$, we denote the elements of $S$
by $i_1<i_2<\cdots<i_k$.
For a partition $\lambda=(\lambda_1,\lambda_2,\ldots, \lambda_k)$,
Gustafson and Milne \cite{GM83} proved
the following identity on Schur functions:
\begin{equation}\label{thm milne}
  s_{(\lambda_1-n+k,\lambda_2-n+k,\ldots,\lambda_k-n+k)}(x_1,x_2,\ldots,x_n)
  =\sum_{S\in\binom{[n]}{k}}
  \frac{s_{\lambda}(x_S)}{\prod\limits_{i\in S}\prod\limits_{j\in \overline{S}}(x_i-x_j)},
\end{equation}
where $s_\lambda(x_S)=s_{\lambda}(x_{i_1}, x_{i_2},\ldots, x_{i_k})$ and $\overline{S}=[n]\setminus S$ is  the complement of $S$.
In the case  $\lambda_k-n+k< 0$, the left-hand side of \eqref{thm milne}
is zero.
Gustafson and Milne \cite{GM83} deduced \eqref{thm milne}
based on the determinantal formula of  Schur functions.
Chen and Louck \cite{ChLo96} gave an alternative approach
 by using  an interpolation
formula for symmetric functions.

Taking  $\lambda=(n-1)$ and then replacing $x_i$ with $x_i^{-1}$ for $i\geq 1$,
the Gustafson-Milne  identity
specializes to the Good's identity
\begin{equation}\label{Good}
  1=\sum_{i=1}^n\prod_{j=1\atop{j\not=i}}^n\left(1-\frac{x_i}{x_j}\right)^{-1},
  \end{equation}
which played a crucial  role in the proof of the
Dyson conjecture \cite{Good}.
In the case $\lambda=(m)$, the Gustafson-Milne  identity  becomes an identity
 due to
Louck \cite{Louck60,Louck70}
\begin{equation}\label{thm louck eq}
  h_{m-n+1}(x_1,x_2,\ldots,x_n)=\frac{\sum_{i=1}^nx_i^m}
  {\prod\limits_{i=1}^n\prod\limits_{j=1\atop{j\not=i}}^n(x_i-x_j)},
\end{equation}
where $h_k(x)$ is the  complete homogeneous symmetric function.
See \cite{GM83} for various applications of the Louck's identity.
The Gustafson-Milne
identity  also appeared in  the calculations  of Thom polynomials \cite{FR}.

On the other hand, in the study of equivariant cohomology  classes of matrix matroid varieties,
Feh\'{e}r, N\'{e}methi and
Rim\'{a}nyi \cite[Theorem 7.5]{FNR12} found the following identity:
\begin{equation}\label{FNR-S}
s_{({\small{\underbrace{m-k,m-k,\ldots,m-k}_{n-k}}},\lambda_1,\lambda_2,\ldots,\lambda_k)}(x_1,x_2,\ldots,x_n)
  =\sum_{S\in\binom{[n]}{k}} s_{\lambda}(x_S)\frac{ \prod\limits_{j\in \overline{S}} x_j^m}
{\prod\limits_{i\in S} \prod\limits_{j\in \overline{S}}(x_j-x_i)}.
\end{equation}
Feh\'{e}r, N\'{e}methi and
Rim\'{a}nyi \cite{FNR12} gave a geometric explanation  for the identity
\eqref{FNR-S}, and  posed  the question of finding
a combinatorial proof.

In this paper, we establish  a Gustafson-Milne type identity as well
as a  Feh\'{e}r-N\'{e}methi-Rim\'{a}nyi type identity  for
 factorial Grothendieck polynomials.
The factorial Grothendieck polynomial $G_\lambda(x|y)$ is the double
Grothendieck polynomial corresponding to a Grassmannian permutation.
These polynomials   can also be  interpreted  in terms of set-valued
tableaux  \cite{Buch,KMY,KMY-2,Mc,Mat17}, or   expressed
as the quotient of determinants \cite{HIMN,Hu-Ma,Ik-Na,Mat}. The lowest degree
homogeneous component of $G_\lambda(x|y)$ is equal to the factorial
Schur function $s_\lambda(x|y)$, which is the double Schubert polynomial
of a Grassmannian permutation and has received extensive
  attention, see, for
example,  \cite{Bi-Lo,Bi-Lo-2,Ch-Lo,Go-Gr,KnTao03,Mac2,Mi,Mo-Sa}.
Restricting a factorial Grothendieck
polynomial to a Schur function, we are led to a combinatorial
proof of the identity \eqref{FNR-S}.
This serves  as an answer to the question posed by Feh\'{e}r,
N\'{e}methi and Rim\'{a}nyi.

Let us proceed with the tableau interpretation of  factorial Grothendieck
polynomials.
Let $\lambda=(\lambda_1,\lambda_2,\ldots, \lambda_\ell)$ be an integer
partition, that is, $\lambda_1,\lambda_2,\ldots,\lambda_\ell$ are
nonnegative integers such that
$\lambda_1\geq \lambda_2\geq \cdots\geq \lambda_\ell\geq 0$.
Write $|\lambda|=\lambda_1+\lambda_2+\cdots+\lambda_\ell$.
The Young diagram of $\lambda$ is a left-justified array
of squares with $\lambda_i$ squares in row $i$. A square $\alpha$ of $\lambda$
in row $i$ and column $j$ is denoted $\alpha=(i,j)$.
If  no confusion arises,
we do not distinguish a partition and its Young diagram.
A set-valued   tableau $T$ of shape $\lambda$
is an assignment of finite sets of positive integers into
the squares of $\lambda$ such that the
sets in each row (respectively, column) are
weakly (respectively, strictly) increasing,
where, for two finite sets $A$ and $B$
 of positive integers, we write $A\leq B$ if $\max A \leq  \min B$
and $A< B$ if $\max A < \min B$.
The notion of set-valued tableaux was introduced by
Buch \cite{Buch} in his study of  the Littlewood-Richardson
rule for stable Grothendieck polynomials.
Let $T(\alpha)$   denote the subset filled in a square $\alpha$.
We write  $|T|=\sum_{\alpha\in \lambda} |T(\alpha)|$ and
write $c(\alpha)=j-i$ for the content of    $\alpha=(i,j)$.

 Let $\mathcal{T}{(\lambda, n)}$ denote
 the set of  set-valued tableaux $T$ of shape $\lambda$ such that
each subset appearing in $T$ is a subset of $[n]$.
For variables $\beta$, $x_i$ and $y_j$, we adopt the following notation
as used by Fomin and Kirillov \cite{FK}:
\[x_i\oplus y_j=x_i+y_j+\beta x_iy_j.\]
The  factorial Grothendieck polynomial  $G_\lambda(x|y)$ is defined as
\begin{equation}\label{def fac combi}
G_{\lambda}(x|y)=\sum_{T\in \mathcal{T}{(\lambda,n)}}
\beta^{|T|-|\lambda|}\prod_{\alpha\in T}
\prod_{t\in T(\alpha)}\left(x_{t}\oplus y_{t+c(\alpha)}\right).
\end{equation}
In the case $\beta=0$, $G_{\lambda}(x|y)$ becomes the
factorial Schur function $s_{\lambda}(x|y)$,
while in the case $\beta=0$ and $y=0$, $G_{\lambda}(x|y)$ specializes
to the Schur function $s_\lambda(x)$.

For example,
there are three set-valued tableaux in $\mathcal{T}{(\lambda,n)}$
for  $\lambda=(2,1)$ and $n=2$:
\[
\begin{ytableau}
\scriptstyle1 &  \scriptstyle1\\
\scriptstyle2
\end{ytableau}
\qquad
\begin{ytableau}
\scriptstyle1 &  \scriptstyle2\\
\scriptstyle2
\end{ytableau}
\qquad
\begin{ytableau}
\scriptstyle1 &  \scriptstyle12\\
\scriptstyle2
\end{ytableau}.
\]
By  \eqref{def fac combi}, we see that
\begin{align}\label{def_ex}
G_{\lambda}(x|y)=&(x_1\oplus y_1)\cdot(x_1\oplus y_2)\cdot(x_2\oplus y_1)+(x_1\oplus y_1)\cdot(x_2\oplus y_3)\cdot(x_2\oplus y_1)\notag\\[5pt]
&\ \ \ +\beta\cdot(x_1\oplus y_1)\cdot(x_1\oplus y_2)\cdot(x_2\oplus y_3)\cdot(x_2\oplus y_1).
\end{align}

Factorial Grothendieck polynomials are double Grothendieck
polynomials of Grassmannian permutations.
The double Grothendieck polynomials were introduced by
Lascoux and Sch\"utzenberger  \cite{La-Sc2}
as  polynomial representatives of the equivariant $K$-theory classes of
structure sheaves of Schubert varieties in the flag manifold.
Let $S_p$ denote the symmetric group of permutations of $[p]$.
For a permutation $w\in S_p$, the double Grothendieck polynomial $\mathfrak{G}_w(x;y)$
is defined  based on the isobaric divided difference operator $\pi_i$ acting
  on $\mathbb{Z}[\beta][x,y]$. For  $f\in \mathbb{Z}[\beta][x,y]$,
  let $s_i f$ be  obtained from $f$ by interchanging $x_i$
and $x_{i+1}$. Then
\[\pi_i\, f=\frac{(1+\beta x_{i+1})f-(1+\beta x_i)s_i f}{x_i-x_{i+1}}.\]
The length of $w$,
denoted $\ell(w)$, is equal to the number of pairs $(i,j)$ such that $1\leq i<j\leq p$
and $w_i>w_j$. For the longest permutation $w_0=p\cdots 2 1$, set
\[\mathfrak{G}_{w_0}(x;y) =\prod_{i+j\leq p}(x_i\oplus y_j).\]
If $w\neq w_0$, then one can choose a simple transposition $s_i$ such that
$\ell(ws_i)>\ell(w)$. Set
\[\mathfrak{G}_w(x;y) =\pi_i\, \mathfrak{G}_{ws_i}(x;y).\]
 Note that the above defined double Grothendieck polynomials  are called  double $\beta$-polynomials \cite{FK}, and
reduce  to the ordinary double Grothendieck polynomials  in the case $\beta=-1$.
In the case $\beta=0$,   $\mathfrak{G}_w(x;y)$ equals  the double
Schubert polynomial $\mathfrak{S}_w(x;y)$ \cite{La-Sc1,Mac4}.

A permutation $w\in S_p$ is a  Grassmannian permutation  if
there is at most one position, say $n$, such that $w_n>w_{n+1}$.
To a Grassmannian permutation  $w$, one can associate a
partition $\lambda(w)=(\lambda_1 , \ldots, \lambda_n)$ where $\lambda_i=w_{n-i+1}-(n-i+1)$. For a Grassmannian
permutation $w$ with $\lambda(w)=\lambda$, it has been shown that
$\mathfrak{G}_w(x;y)=G_\lambda(x|y)$,  see for example \cite{Buch,KMY,KMY-2,Mc,Mat17}.

On the other hand, there have been determinantal formulas for factorial
Grothendieck polynomials \cite{HIMN,Hu-Ma,Ik-Na,Mat}.
For the purpose of this paper, we need the following
determinantal formula for $ G_\lambda(x|y)$ due to Ikeda and Naruse   \cite{Ik-Na}:
\begin{equation}\label{frac deter}
  G_\lambda(x|y)=\frac{\det\left([x_i|y]^{\lambda_j+n-j}(1+\beta x_i)^{j-1}\right)_{1\leq i, j\leq n}}
{\prod\limits_{1\leq i<j\leq n}(x_i-x_j)},
\end{equation}
where
\[[x_i|y]^j=(x_i\oplus y_1)(x_i\oplus y_2)\cdots (x_i\oplus y_j).\]
For example, for $\lambda=(2,1)$ and $n=2$, by  \eqref{frac deter}, we  have
\begin{equation*}\label{def_ex1}
  G_{\lambda}(x|y)=\frac{1}{x_1-x_2}
    \begin{vmatrix}
    [x_1|y]^3&[x_1|y](1+\beta x_1)\\[5pt]
    [x_2|y]^3&[x_2|y](1+\beta x_2)
  \end{vmatrix},
\end{equation*}
which is the same as  \eqref{def_ex} via a simple calculation.

\section{The Gustafson-Milne type identity}\label{Sect2}

In this section, we prove a Gustafson-Milne type identity for  factorial
Grothendieck polynomials which can be  stated as follows.

\begin{thm}\label{thm gen milne}
Let
$x=(x_1,\ldots,x_n)$ and
$y=(y_1,y_2,\ldots)$ be two sets of variables.
For a partition
$\lambda=(\lambda_1,\lambda_2,\ldots,\lambda_k)$, we have
\begin{equation}\label{XXX}
  G_{(\lambda_1-n+k,\lambda_2-n+k,\ldots,\lambda_k-n+k)}(x|y)=\sum_{S\in\binom{[n]}{k}}
  G_{(\lambda_1,\lambda_2,\ldots,\lambda_k)}(x_S|y)\frac{\prod\limits_{j\in \overline{S}}(1+\beta x_j)^k}{\prod\limits_{i\in S} \prod\limits_{j\in \overline{S}}(x_i-x_j)},
  \end{equation}
  where the left-hand side of \eqref{XXX} is zero in the case  $\lambda_k-n+k<0$.
\end{thm}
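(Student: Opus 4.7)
My plan is to derive \eqref{XXX} by combining the Ikeda--Naruse determinantal formula \eqref{frac deter} with a Laplace expansion of a determinant along its first $k$ columns. First I would pad the indexing sequence $(\lambda_1-n+k,\ldots,\lambda_k-n+k)$ with $n-k$ trailing zeros to length $n$, so that \eqref{frac deter} expresses the left-hand side of \eqref{XXX} as $\det(M)/\prod_{1\le i<j\le n}(x_i-x_j)$, where the $n\times n$ matrix $M$ has entries
\[
M_{ij}=\begin{cases}[x_i|y]^{\lambda_j+k-j}\,(1+\beta x_i)^{j-1},&1\le j\le k,\\[2pt] [x_i|y]^{n-j}\,(1+\beta x_i)^{j-1},&k<j\le n.\end{cases}
\]
The structural observation driving the proof is that the last $n-k$ columns depend only on $n$ and $k$, not on $\lambda$.

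Next I would expand $\det(M)$ by the Laplace rule along its first $k$ columns:
\[
\det(M)=\sum_{S\in\binom{[n]}{k}}\varepsilon(S)\,\det(M_{S,[k]})\,\det(M_{\bar S,[k+1,n]}),
\]
with cofactor sign $\varepsilon(S)=(-1)^{\binom{k+1}{2}+\sum_{a=1}^{k} i_a}$. A second appeal to \eqref{frac deter}, this time in the $k$ variables $x_S$, identifies $\det(M_{S,[k]})$ with $G_\lambda(x_S|y)\prod_{a<b}(x_{i_a}-x_{i_b})$. For the complementary minor I would factor $(1+\beta x_{j_c})^k$ out of each row $j_c\in\bar S$; what remains is precisely the Ikeda--Naruse matrix attached to the empty partition in the $n-k$ variables $x_{\bar S}$, whose determinant equals $\prod_{c<d}(x_{j_c}-x_{j_d})$ because $G_\emptyset\equiv 1$. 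Hence
\[
\det(M_{\bar S,[k+1,n]})=\Bigl(\prod_{j\in\bar S}(1+\beta x_j)^k\Bigr)\prod_{c<d}(x_{j_c}-x_{j_d}).
\]

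To conclude I would divide by the full Vandermonde, split as
\[
\prod_{i<j}(x_i-x_j)=\prod_{a<b}(x_{i_a}-x_{i_b})\cdot\prod_{c<d}(x_{j_c}-x_{j_d})\cdot(-1)^{\mathrm{inv}(S)}\prod_{i\in S,\,j\in\bar S}(x_i-x_j),
\]
where $\mathrm{inv}(S)=\sum_{a}i_a-\binom{k+1}{2}$, and verify that $(-1)^{\mathrm{inv}(S)}=\varepsilon(S)$. The Laplace cofactor sign then cancels that of the mixed Vandermonde, the two inner Vandermondes cancel those produced in the previous step, and each $S$-summand reduces to exactly $G_\lambda(x_S|y)\prod_{j\in\bar S}(1+\beta x_j)^k/\prod_{i\in S}\prod_{j\in\bar S}(x_i-x_j)$; summing over $S$ yields \eqref{XXX}. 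I expect the most delicate step to be this final sign accounting between the Laplace cofactor and the Vandermonde splitting, which reduces to a count of inversions of the block permutation $(i_1,\ldots,i_k,j_1,\ldots,j_{n-k})$. The boundary case $\lambda_k-n+k<0$ is handled by a separate observation: the indexing sequence on the left-hand side is then not a partition, and by the combinatorial definition \eqref{def fac combi} the left-hand side is assigned the value zero.
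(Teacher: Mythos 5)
Your argument follows the same route as the paper's: pad the index sequence to length $n$, apply the Ikeda--Naruse formula \eqref{frac deter}, Laplace-expand along the first $k$ columns, factor $(1+\beta x_j)^k$ out of each row of the complementary minor, and match the Laplace cofactor sign against the inversion count $\sum_a i_a-\binom{k+1}{2}$ coming from the Vandermonde splitting; all of these steps, including the signs, agree with the paper. The one genuine difference is how you evaluate the complementary minor $\det\bigl([x_r|y]^{n-k-c}(1+\beta x_r)^{c-1}\bigr)_{r\in\overline{S},\,1\leq c\leq n-k}$. The paper isolates this as Lemma \ref{lem det} and proves it from scratch by column operations, using the recurrence \eqref{YUN-2} for the $\beta$-deformed elementary symmetric functions. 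You instead observe that this matrix is exactly the Ikeda--Naruse numerator for the empty partition in the variables $x_{\overline{S}}$, so its determinant is $G_\emptyset(x_{\overline{S}}|y)\prod_{c<d}(x_{j_c}-x_{j_d})=\prod_{c<d}(x_{j_c}-x_{j_d})$ by \eqref{def fac combi}. This is a legitimate and shorter derivation of the same evaluation (and it also explains why the determinant is independent of $y$, which the paper needs again in Section \ref{sec pf fbr12} with shifted $y$-variables); the cost is that it leans entirely on the quoted formula \eqref{frac deter}, whereas Lemma \ref{lem det} is self-contained.

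One caveat: your treatment of the case $\lambda_k-n+k<0$ is not a proof. Declaring the left-hand side to be zero by convention only defines the left-hand side; to establish the identity in that regime you would still have to show that the right-hand side (equivalently, the determinant $\det(M)$) vanishes, and your argument never does this. In fact it does not vanish in general when $\beta\neq 0$: for $n=2$, $k=1$, $\lambda=(0)$ the right-hand side of \eqref{XXX} equals $\frac{1+\beta x_2}{x_1-x_2}+\frac{1+\beta x_1}{x_2-x_1}=-\beta$. The paper's own proof is silent on this boundary case, so you are no worse off, but you should not present the convention as if it settled the identity there.
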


Theorem \ref{thm gen milne} becomes an
identity on factorial Schur functions in the case $\beta=0$, and
specializes to the Gustafson-Milne identity \eqref{thm milne} in
 the case $\beta=0$ and $y=0$.

Taking $\lambda=(n-1)$  and $\lambda=(m)$ respectively in Theorem \ref{thm gen milne}, we obtain   two identities  which contain the Good's identity  \eqref{Good} and the Louck's identity \eqref{thm louck eq} as special cases.

\begin{cor}
  Let
$x=(x_1,\ldots,x_n)$ and
$y=(y_1,y_2,\ldots)$ be two sets of variables. Then,
  \begin{equation}\label{FII}
  1=\sum_{i=1}^n [x_i|y]^{n-1}\prod_{j=1\atop{j\not= i}}^n\frac{1+\beta x_j}{x_i-x_j}
  \end{equation}
  and
  \begin{equation*}
  h_{m-n+1}(x|y)=\sum_{i=1}^n[x_i|y]^m\prod\limits_{j=1\atop{j\not=i}}^n\frac{1+\beta x_j}{x_i-x_j}.
\end{equation*}
\end{cor}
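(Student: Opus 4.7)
The plan is to obtain both identities as direct specializations of Theorem \ref{thm gen milne} at $k=1$. When $k=1$, each $S\in\binom{[n]}{1}$ is a singleton $\{i\}$, so $x_S=(x_i)$ and $\overline{S}=[n]\setminus\{i\}$; the denominator in \eqref{XXX} collapses to $\prod_{j\neq i}(x_i-x_j)$, and the numerator becomes $\prod_{j\neq i}(1+\beta x_j)$.

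The second ingredient I need is the evaluation of $G_{(r)}(x_i|y)$ at a single variable. I would read this off from the tableau formula \eqref{def fac combi}: a set-valued tableau in $\mathcal{T}((r),1)$ has all entries drawn from $\{1\}$, so the columns are forced to be strictly increasing singletons of value $1$, forcing every box to contain exactly $\{1\}$. There is therefore a unique tableau $T$, it satisfies $|T|=r=|\lambda|$ (killing the $\beta^{|T|-|\lambda|}$ factor), and the box in column $c$ has content $c-1$, so
\[
G_{(r)}(x_1|y)\;=\;\prod_{c=1}^{r}(x_1\oplus y_c)\;=\;[x_1|y]^{r}.
\]
Relabeling gives $G_{(r)}(x_i|y)=[x_i|y]^{r}$.

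For the first identity I take $\lambda=(n-1)$, so the shifted partition on the left-hand side of \eqref{XXX} is $(0)$, and $G_{(0)}(x|y)=1$ because the only tableau of empty shape is the empty tableau. Substituting the above computation of $G_{(n-1)}(x_i|y)=[x_i|y]^{n-1}$ on the right-hand side yields exactly \eqref{FII}. For the second identity I take $\lambda=(m)$, so the left-hand side of \eqref{XXX} is $G_{(m-n+1)}(x|y)$; since the factorial Grothendieck polynomial of a one-row shape $(r)$ is by definition the factorial complete homogeneous symmetric function $h_r(x|y)$, this equals $h_{m-n+1}(x|y)$, and substituting $G_{(m)}(x_i|y)=[x_i|y]^{m}$ on the right-hand side produces the stated formula. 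The case $m-n+1<0$ is handled by the zero convention already built into Theorem \ref{thm gen milne}.

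There is no real obstacle here; the only point requiring any verification is the one-variable evaluation $G_{(r)}(x_i|y)=[x_i|y]^{r}$, and this is immediate from the tableau definition because the uniqueness of the tableau with entries in $[1]$ leaves no combinatorial freedom. Everything else is bookkeeping: reading off $k=1$ in \eqref{XXX} and identifying the row-shape factorial Grothendieck polynomial with $h_{m-n+1}(x|y)$.
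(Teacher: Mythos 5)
Your proposal is correct and matches the paper's (implicit) argument exactly: the paper derives this corollary by setting $\lambda=(n-1)$ and $\lambda=(m)$ with $k=1$ in Theorem \ref{thm gen milne}, just as you do. Your explicit verification that $G_{(r)}(x_i|y)=[x_i|y]^{r}$ from the tableau definition, and the identification of the one-row polynomial with $h_{m-n+1}(x|y)$, supply the bookkeeping the paper leaves to the reader.
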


The following lemma is key to the proof of Theorem \ref{thm gen milne},
 which can be viewed as a generalization of the  Vandermonde determinant.

\begin{lem}\label{lem det}
For $n\geq 1$,
\begin{equation}\label{eq lem det}
  \det\left([x_r|y]^{n-c}(1+\beta x_r)^{c-1}\right)_{1\leq r,c\leq n}=\prod_{1\leq i<j\leq n}(x_i-x_j),
 \end{equation}
  where, in the case $n=1$, both sides of \eqref{eq lem det} are equal to one.
\end{lem}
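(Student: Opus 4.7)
Let $\Delta$ denote the left-hand side of \eqref{eq lem det}. The plan is to reduce $\Delta$ to a classical Vandermonde via the change of variables $t_r := 1 + \beta x_r$ and $s_j := 1 + \beta y_j$. The key algebraic input is the factorization
\[
1 + \beta(x_r \oplus y_j) = (1 + \beta x_r)(1 + \beta y_j),
\]
which follows immediately from the definition of $\oplus$ and rearranges to $x_r \oplus y_j = (t_r s_j - 1)/\beta$. Applying this to each factor of $[x_r|y]^{n-c}$, the $(r,c)$-entry becomes
\[
[x_r|y]^{n-c}(1+\beta x_r)^{c-1} = \beta^{-(n-c)}\,t_r^{c-1}\prod_{j=1}^{n-c}(t_r s_j - 1) = \beta^{-(n-c)}\,P_c(t_r),
\]
where $P_c(t) := t^{c-1}\prod_{j=1}^{n-c}(t s_j - 1)$. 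Pulling the scalar $\beta^{-(n-c)}$ out of each column would then give $\Delta = \beta^{-\binom{n}{2}}\det\bigl(P_c(t_r)\bigr)_{1\le r,c\le n}$.

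The next step is to observe that $P_c(t)$ is a polynomial in $t$ of degree exactly $n-1$ whose lowest power is $t^{c-1}$, with coefficient $(-1)^{n-c}$ coming from the constant term of $\prod_{j=1}^{n-c}(t s_j - 1)$. Writing $P_c(t) = \sum_{k=1}^n B_{k,c}\,t^{k-1}$, this says that the $n\times n$ matrix $B = (B_{k,c})$ is triangular with $B_{k,c} = 0$ for $k < c$ and diagonal entries $B_{c,c} = (-1)^{n-c}$, so $\det B = (-1)^{\binom{n}{2}}$.

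Finally, from the factorization $\bigl(P_c(t_r)\bigr)_{r,c} = \bigl(t_r^{k-1}\bigr)_{r,k}\cdot B$ and the classical Vandermonde evaluation $\det\bigl(t_r^{k-1}\bigr) = \prod_{1\le i<j\le n}(t_j - t_i) = \beta^{\binom{n}{2}}\prod_{i<j}(x_j - x_i)$, the powers of $\beta$ cancel and the sign $(-1)^{\binom{n}{2}}$ flips $\prod_{i<j}(x_j - x_i)$ into the desired $\prod_{i<j}(x_i - x_j)$. The main obstacle is recognizing the triangular structure of $B$; once that is seen, the rest is routine bookkeeping of signs and powers of $\beta$. (As a sanity check, the identity is also the specialization of the Ikeda--Naruse formula \eqref{frac deter} at the empty partition, where $G_\emptyset(x|y) = 1$.)
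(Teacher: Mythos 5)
Your proof is correct, and it takes a genuinely different route from the paper's. Both arguments ultimately rest on a factorization of the matrix as (Vandermonde) $\times$ ($x$-free matrix), but the two factorizations are different. The paper expands each entry in powers of $x_r$, with coefficients written in terms of the deformed elementary symmetric functions $E^{(\beta)}_k(Y_m)=\sum_{S}y_S(1+\beta y)_{\overline S}$; the resulting cofactor matrix $H(n)$ is not visibly triangular, so its determinant is evaluated by an inductive sequence of column operations driven by the recurrence $E^{(\beta)}_k(Y_m)=(1+\beta y_m)E^{(\beta)}_k(Y_{m-1})+y_mE^{(\beta)}_{k-1}(Y_{m-1})$. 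You instead exploit the multiplicativity $1+\beta(x\oplus y)=(1+\beta x)(1+\beta y)$ to substitute $t_r=1+\beta x_r$, $s_j=1+\beta y_j$, after which each column is a polynomial $P_c(t)$ of degree $n-1$ whose lowest-order term is $(-1)^{n-c}t^{c-1}$; the transition matrix $B$ is then triangular by inspection and the whole computation reduces to the classical Vandermonde plus sign and $\beta$-power bookkeeping. This is arguably cleaner: it replaces the paper's induction and recurrence with a one-line triangularity observation, at the mild cost of working in $\mathbb{Z}[\beta^{\pm1}][x,y]$ (you divide by $\beta$). That cost is harmless — both sides of \eqref{eq lem det} are polynomials in $\beta$, and an identity between polynomials verified in the localization at $\beta$ (equivalently, in the fraction field of the integral domain $\mathbb{Z}[\beta][x,y]$) holds identically — but it would be worth one sentence to say so explicitly. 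Your closing remark that the lemma is the $\lambda=\emptyset$ case of \eqref{frac deter} is a legitimate sanity check, correctly kept out of the proof itself since it would import the full Ikeda--Naruse formula to prove a statement the paper wants as an independent ingredient.
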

\begin{proof}
For a set  $S=\{i_1<i_2<\cdots<i_k\}$ of positive integers, let
\[y_S=y_{i_1}y_{i_2}\cdots y_{i_k}\]
 and
\[(1+\beta y)_S=(1+\beta y_{i_1})(1+\beta y_{i_2})\cdots (1+\beta y_{i_k}).\]
With the above notation, we define
\[E^{(\beta)}_k(Y_n)=\sum_{S\in\binom{[n]}{k}}y_{S}(1+\beta y)_{\overline{S}},\]
where $\overline{S}=[n]\setminus S$. Here,  $E^{(\beta)}_k(Y_n)=0$ unless   $0\leq k\leq n$.
Note that when $\beta=0$, the polynomial  $E^{(\beta)}_k(Y_n)$ is the elementary symmetric function  $e_k(y_1,y_2,\ldots,y_n)$.
It is easily checked  that $E^{(\beta)}_k(Y_n)$
satisfies the following recurrence relation:
\begin{equation}\label{YUN-2}
E^{(\beta)}_k(Y_n)=(1+\beta y_n)E^{(\beta)}_{k}(Y_{n-1})+y_nE^{(\beta)}_{k-1}(Y_{n-1}).
\end{equation}

The entry on the left-hand side of  \eqref{eq lem det} can be reformulated as
\begin{align*}
[x_r|y]^{n-c}(1+\beta x_r)^{c-1}&=\left(\sum_{h=0}^{n-c} E^{\beta}_h (Y_{n-c})x_r^{n-c-h}\right)
\left(\sum_{i=0}^{c-1} {c-1 \choose i} \beta^i x_r^i\right)\nonumber\\[5pt]
&=\sum_{j=0}^{n-1}\left(\sum_{i=0}^{j}\binom{c-1}{i}E^{(\beta)}_{n-c-j+i}
(Y_{n-c})\beta^i\right)x_r^j\nonumber\\[5pt]
&=\sum_{j=1}^{n }x_r^{n-j}\left(\sum_{i=0}^{n-j}\binom{c-1}{i}E^{(\beta)}_{i-c+j}
(Y_{n-c})\beta^i\right),
\end{align*}
which implies that the left-hand side of  \eqref{eq lem det} can be  written as
a product involving a  Vandermonde determinant:
\begin{align*}
   &\det\left([x_r|y]^{n-c}(1+\beta x_r)^{c-1}\right)_{1\leq r,c\leq n}\nonumber\\[5pt]
   &\ \ \ =\det(x_r^{n-c})_{1\leq r,c\leq n}\cdot \det\left(\sum_{i=0}^{n-r}\binom{c-1}{i}E^{(\beta)}_{i-c+r}(Y_{n-c})\beta^i\right)_{1\leq r,c\leq n}\nonumber\\[5pt]
   &\ \ \ =\prod_{1\leq i<j\leq n}(x_i-x_j)\cdot \det(H(n)).
\end{align*}

It remains to evaluate that $\det(H(n))=1$.
Denote by $\mathrm{Col}_c$ the $c$-th column of  $H(n)$.
 For  $1\leq c \leq n-1$,  we apply the  column transformation
  $\mathrm{Col}_c-y_{n-c}\, \mathrm{Col}_{c+1}$, that is, $\mathrm{Col}_c$
 is replaced by $\mathrm{Col}_c-y_{n-c}\,\mathrm{Col}_{c+1}$.
Let $H'(n)$ be the resulting matrix after such column transformations.
Then the entry of $H'(n)$ in row $1\leq r\leq n$ and column $1\leq c\leq n-1$ can be simplified as
\begin{align}\label{H(n)tra}
  &\sum_{i=0}^{n-r}\binom{c-1}{i}E^{(\beta)}_{i-c+r}(Y_{n-c})\beta^i
  -y_{n-c}\sum_{i=0}^{n-r}\binom{c}{i}E^{(\beta)}_{i-c+r-1}(Y_{n-c-1})\beta^i\nonumber\\[5pt]
  &=\sum_{i=0}^{n-r}\binom{c-1}{i}E^{(\beta)}_{i-c+r}(Y_{n-c})\beta^i
  -y_{n-c}\sum_{i=0}^{n-r}\left[\binom{c-1}{i}+\binom{c-1}{i-1}\right]
  E^{(\beta)}_{i-c+r-1}(Y_{n-c-1})\beta^i\nonumber\\[5pt]
  &=\sum_{i=0}^{n-r}\binom{c-1}{i}\left[E^{(\beta)}_{i-c+r}(Y_{n-c})-y_{n-c}
  E^{(\beta)}_{i-c+r-1}(Y_{n-c-1})\right]\beta^i\nonumber\\[5pt]
  &\ \ \ \ \ \ \ -y_{n-c}\sum_{i=0}^{n-r}\binom{c-1}{i-1}E^{(\beta)}_{i-c+r-1}(Y_{n-c-1})
  \beta^{i}\nonumber\\[5pt]
  &=(1+\beta y_{n-c})\sum_{i=0}^{n-r-1}\binom{c-1}{i}E^{(\beta)}_{i-c+r}(Y_{n-c-1})\beta^i -y_{n-c}\sum_{i=0}^{n-r-1}\binom{c-1}{i}E^{(\beta)}_{i-c+r}(Y_{n-c-1})\beta^{i+1}\nonumber\\[5pt] &=\sum_{i=0}^{n-r-1}\binom{c-1}{i}E^{(\beta)}_{i-c+r}(Y_{n-c-1})\beta^i,
\end{align}
where the third equality follows from \eqref{YUN-2}.

Notice that   the first $n-1$ entries in the
$n$-th row of $H'(n)$  are all zero, while that last entry in the $n$-th row of
$H'(n)$ is one. Therefore, by \eqref{H(n)tra} and using the Laplace expansion along the $n$-th row of $H'(n)$, we have
\begin{align*}
 \det(H(n))&=\det(H'(n))=\det\left(\sum_{i=0}^{n-r-1}\binom{c-1}{i}E^{(\beta)}_{i-c+r}(Y_{n-c-1})\beta^i\right)_{1\leq r,c\leq n-1}\\[5pt]
  &=\det(H(n-1)).
\end{align*}
With the initial value  $\det(H(1))=1$ and using induction, we deduce that
$\det(H(n))=1$ for  $n\geq 1$. This completes the proof.
\end{proof}

Now we are ready to present a proof of  Theorem \ref{thm gen milne}.

\begin{proof}[Proof of Theorem \ref{thm gen milne}.]
Write $\nu=(\lambda_1-n+k,\lambda_2-n+k,\ldots,\lambda_k-n+k)$.
By   \eqref{frac deter},
\begin{align}\label{eq pf milne}
G_{\nu}(x|y)
=\frac{\det\left([x_i|y]^{\nu_j+n-j}(1+\beta x_i)^{j-1}\right)_{1\leq i, j\leq n}}
{\prod\limits_{1\leq i<j\leq n}(x_i-x_j)},
\end{align}
where $\nu_j=0$ for $j>k$.
By the Laplace expansion of a determinant
 along  the first $k$ columns,
the numerator on  the right-hand side of \eqref{eq pf milne}
equals
\begin{align}
&\det\left([x_i|y]^{\nu_j+n-j}(1+\beta x_i)^{j-1}\right)_{1\leq i, j\leq n}\notag\\[5pt]
&\ \ \ \ \ \ =\sum_{S\in \binom{[n]}{k}}(-1)^{\binom{k+1}{2}+\sum_{r\in S}r}\det\left([x_r|y]^{\lambda_c+k-c}(1+\beta x_r)^{c-1}\right)_{r\in S\atop{1\leq c\leq k}}\notag\\[5pt]
&\ \ \ \ \ \ \ \ \ \ \ \ \ \ \cdot\det\left([x_r|y]^{n-k-c}(1+\beta x_r)^{k+c-1}\right)_{r\in \overline{S}\atop{1\leq c\leq n-k}}\notag\\[5pt]
&\ \ \ \ \ \ =\sum_{S\in \binom{[n]}{k}}(-1)^{\binom{k+1}{2}+\sum_{r\in S}r}\det\left([x_r|y]^{\lambda_c+k-c}(1+\beta x_r)^{c-1}\right)_{r\in S\atop{1\leq c\leq k}}\notag\\[5pt]
&\ \ \ \ \ \ \ \ \ \ \ \ \ \ \cdot\det\left([x_r|y]^{n-k-c}(1+\beta x_r)^{c-1}\right)_{r\in \overline{S}\atop{1\leq c\leq n-k}}\cdot
\prod_{j\in \overline{S}}(1+\beta x_j)^{k}.
\label{milne numerator}
\end{align}
By Lemma \ref{lem det}, we see that
\begin{align*}
\det\left([x_r|y]^{n-k-c}(1+\beta x_r)^{c-1}\right)_{r\in \overline{S}\atop{1\leq c\leq n-k}}=\prod_{i<j\atop{i,j\in\overline{S}}}(x_i-x_j),
\end{align*}
and so \eqref{milne numerator} can be expressed as
\begin{align}\label{READ}
&\det\left([x_i|y]^{\nu_j+n-j}(1+\beta x_i)^{j-1}\right)_{1\leq i, j\leq n}\notag\\[5pt]
&\ \ \ \ \ \ =\sum_{S\in \binom{[n]}{k}}(-1)^{\binom{k+1}{2}+\sum_{r\in S}r}\det\left([x_r|y]^{\lambda_c+k-c}(1+\beta x_r)^{c-1}\right)_{r\in S\atop{1\leq c\leq k}}\notag\\[5pt]
&\ \ \ \ \ \ \ \ \ \ \ \ \ \ \cdot\prod_{i<j\atop{i,j\in\overline{S}}}(x_i-x_j)\cdot \prod_{j\in \overline{S}}(1+\beta x_j)^{k}.
\end{align}

For any $k$-subset $S\subseteq [n]$,
notice that
\begin{align*}
  \prod\limits_{{i\in S\atop{j\in \overline{S}}}}(x_i-x_j)
    & =
   \prod\limits_{{i<j}\atop{{i\in S\atop{j\in\overline{S}}}}}(x_i-x_j)
   \prod\limits_{{i>j}\atop{{i\in S\atop{j\in \overline{S}}}}}(x_i-x_j)\\[5pt]
   &=(-1)^{-\binom{k+1}{2}+\sum_{r\in S}r}
   \prod\limits_{{i<j}\atop{{i\in S\atop{j\in\overline{S}}}}}(x_i-x_j)
   \prod\limits_{{i>j}\atop{{i\in S\atop{j\in \overline{S}}}}}(x_j-x_i).
\end{align*}
Hence the
 denominator on the right-hand side of   \eqref{eq pf milne} can be rewritten as
\begin{align}
 \prod\limits_{1\leq i<j\leq n}(x_i-x_j)
 =&\prod\limits_{{i<j\atop{i,j\in S}}}(x_i-x_j)
  \prod\limits_{{i<j\atop{i,j\in\overline{S}}}}(x_i-x_j)
  \prod\limits_{{i<j}\atop{{i\in S\atop{j\in \overline{S}}}}}(x_i-x_j)
  \prod\limits_{{i>j}\atop{{i\in S\atop{j\in \overline{S}}}}}(x_j-x_i)\notag\\[5pt]
 =&(-1)^{-\binom{k+1}{2}+\sum_{r\in S}r}\prod\limits_{{i<j\atop{i,j\in S}}}(x_i-x_j)
 \prod\limits_{{i<j\atop{i,j\in\overline{S}}}}(x_i-x_j)\prod\limits_{{i\in S\atop{j\in \overline{S}}}}(x_i-x_j).\label{milne denumerator}
\end{align}

Putting  \eqref{READ} and   \eqref{milne denumerator} into \eqref{eq pf milne}, we deduce that
\begin{align}\label{ABC}
   &G_{(\lambda_1-n+k,\lambda_2-n+k,\ldots,\lambda_k-n+k)}(x|y)\nonumber\\[5pt]
   &\ \ \ \ \ =\sum_{S\in \binom{[n]}{k}}\frac{\det\left([x_r|y]^{\lambda_c+k-c}(1+\beta x_r)^{c-1}\right)_{r\in S\atop{1\leq c\leq k}}
   \prod\limits_{j\in \overline{S}}(1+\beta x_j)^{k}}{\prod\limits_{{i<j\atop{i,j\in S}}}(x_i-x_j)
    \prod\limits_{{i\in S\atop{j\in \overline{S}}}}(x_i-x_j)}.
\end{align}
By \eqref{frac deter},
\[ G_{(\lambda_1,\lambda_2,\ldots,\lambda_k)}(x_S|y)=
\frac{\det\left([x_r|y]^{\lambda_c+k-c}(1+\beta x_r)^{c-1}\right)_{r\in S\atop{1\leq c\leq k}}}{\prod\limits_{{i<j\atop{i,j\in S}}}(x_i-x_j)},\]
which together with \eqref{ABC} yields \eqref{XXX}.
This completes the proof.
\end{proof}

\section{The  Feh\'{e}r-N\'{e}methi-Rim\'{a}nyi type identity}\label{sec pf fbr12}

In this section, we provide a
Feh\'{e}r-N\'{e}methi-Rim\'{a}nyi type identity
on  factorial Grothendieck polynomials.
The proof also relies  on \eqref{frac deter} and Lemma \ref{lem det}.
Restricting to  Schur functions, we obtain a
combinatorial proof of the identity \eqref{FNR-S}
 given by Feh\'{e}r, N\'{e}methi and Rim\'{a}nyi.

\begin{thm}\label{combinat_thm}
Let
$x=(x_1,\ldots,x_n)$ and
$y=(y_1,y_2,\ldots)$ be two sets of variables.
Let $\lambda=(\lambda_1,\lambda_2,\ldots,\lambda_k)$
be a partition such that  $\lambda_1\leq m-k$, and
let \[\mu=({\small{\underbrace{m-k,m-k,\ldots,m-k}_{n-k}}},
\lambda_1,\lambda_2,\ldots,\lambda_k).\]
Then we have
\begin{equation}\label{eq thm'}
   G_{\mu}
   (x|y)=\sum_{S\in\binom{[n]}{k}} G_{\lambda}(x_S|y)\frac{ \prod\limits_{i\in S}(1+\beta x_i)^{n-k}\prod\limits_{j\in \overline{S}} [x_j|y]^m}
{ \prod\limits_{i\in S}\prod\limits_{j\in \overline{S}} (x_j-x_i)}.
\end{equation}
\end{thm}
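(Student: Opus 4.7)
My plan is to follow the same template as the proof of Theorem \ref{thm gen milne}. Apply the determinantal formula \eqref{frac deter} to express $G_\mu(x|y)$ as an $n\times n$ determinant over the Vandermonde $\prod_{1\le i<j\le n}(x_i-x_j)$. For $\mu$, the $j$-th column has entries $[x_i|y]^{\mu_j+n-j}(1+\beta x_i)^{j-1}$, where $\mu_j+n-j=m-k+n-j$ for $1\le j\le n-k$, and, writing $j=n-k+c$, one has $\mu_j+n-j=\lambda_c+k-c$ for the last $k$ columns with $1\le c\le k$. I Laplace-expand along these last $k$ columns, producing a sum over $S\in\binom{[n]}{k}$ of signed products of a $k\times k$ minor (rows $S$, columns $n-k+1,\ldots,n$) and an $(n-k)\times(n-k)$ complementary minor (rows $\overline{S}$, columns $1,\ldots,n-k$).

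The $k\times k$ minor is handled exactly as in Theorem \ref{thm gen milne} up to the extra factor $(1+\beta x_r)^{n-k}$ in each row $r\in S$. Pulling this common factor out and invoking \eqref{frac deter} applied to $G_\lambda(x_S|y)$ identifies the minor with $\prod_{i\in S}(1+\beta x_i)^{n-k}\cdot G_\lambda(x_S|y)\cdot\prod_{i<j,\,i,j\in S}(x_i-x_j)$, which already exhibits the numerator factor $\prod_{i\in S}(1+\beta x_i)^{n-k}$ expected in \eqref{eq thm'}.

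The key step is evaluating the complementary minor
\[
M_2(S)\;=\;\det\!\Big([x_r|y]^{m-k+n-j}(1+\beta x_r)^{j-1}\Big)_{r\in\overline{S},\,1\le j\le n-k}.
\]
The hypothesis $\lambda_1\le m-k$ forces $m\ge k$, so $m-k+n-j\ge m$ throughout $1\le j\le n-k$ and I can split
\[
[x_r|y]^{m-k+n-j}\;=\;[x_r|y]^m\cdot (x_r\oplus y_{m+1})(x_r\oplus y_{m+2})\cdots(x_r\oplus y_{m+n-k-j})\;=\;[x_r|y]^m\cdot[x_r|y']^{n-k-j},
\]
where $y'=(y_{m+1},y_{m+2},\ldots)$ is the tail of $y$ shifted by $m$. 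Factoring $[x_r|y]^m$ out of each row $r\in\overline{S}$ reduces $M_2(S)$ to $\prod_{r\in\overline{S}}[x_r|y]^m$ times the determinant $\det([x_r|y']^{(n-k)-j}(1+\beta x_r)^{j-1})_{r\in\overline{S},\,1\le j\le n-k}$. The latter is exactly the left-hand side of \eqref{eq lem det} with $n$ replaced by $n-k$, variable set $\{x_r\}_{r\in\overline{S}}$, and $y$-sequence $y'$. Because the right-hand side of Lemma \ref{lem det} does not depend on the $y$-variables, the lemma gives $\prod_{i<j,\,i,j\in\overline{S}}(x_i-x_j)$, so $M_2(S)=\prod_{r\in\overline{S}}[x_r|y]^m\cdot\prod_{i<j,\,i,j\in\overline{S}}(x_i-x_j)$. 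This is the main obstacle, and the essential new observation beyond the Gustafson-Milne case.

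Finally, I combine the Laplace expansion with the Vandermonde denominator $\prod_{1\le i<j\le n}(x_i-x_j)$, split as in the proof of Theorem \ref{thm gen milne} into $S$-internal, $\overline{S}$-internal and mixed factors. The $S$-internal and $\overline{S}$-internal Vandermondes cancel against those produced by the $\lambda$-block minor and by $M_2(S)$ respectively, while the mixed factor combines with the Laplace sign to yield $\prod_{i\in S,\,j\in\overline{S}}(x_j-x_i)$ in the denominator. The sign bookkeeping requires some care: expanding along the last $k$ columns (rather than the first $k$ as in Theorem \ref{thm gen milne}) contributes an extra $(-1)^{k(n-k)}$, which is precisely the sign needed to convert the mixed Vandermonde $\prod_{i\in S,\,j\in\overline{S}}(x_i-x_j)$ into $\prod_{i\in S,\,j\in\overline{S}}(x_j-x_i)$. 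Once the signs are reconciled, the identity \eqref{eq thm'} follows.
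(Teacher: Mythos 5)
Your proposal is correct and follows essentially the same route as the paper's proof: the determinantal formula \eqref{frac deter}, Laplace expansion along the last $k$ columns, evaluation of the complementary minor by factoring out $\prod_{r\in\overline{S}}[x_r|y]^m$ and applying Lemma \ref{lem det} to the $y$-shifted determinant (whose value is $y$-independent), and the same sign bookkeeping via the extra $(-1)^{k(n-k)}$. One trivial quibble: the inequality $m-k+n-j\geq m$ follows from $j\leq n-k$, not from $m\geq k$ as you state, but this does not affect the argument.
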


Note that in the case $\beta=0$ and $y_i=0$, Theorem
\ref{combinat_thm} specializes to \eqref{FNR-S}.

\begin{proof}[Proof of Theorem \ref{combinat_thm}.]
Again, by\eqref{frac deter}, we have
\begin{equation}\label{frac grothendieck mu}
G_\mu(x|y)=\frac{\det\left([x_i|y]^{\mu_j+n-j}(1+\beta x_i)^{j-1}\right)_{1\leq i, j\leq n}}
{\prod\limits_{1\leq i<j\leq n}(x_i-x_j)}.
\end{equation}
Using the Laplace expansion by the last $k$ columns,
the numerator in \eqref{frac grothendieck mu} can be written as
\begin{align}\label{numerator1}
&\sum_{S\in \binom{[n]}{k}}(-1)^{nk-\binom{k}{2}+\sum_{r\in S}r}\det\left([x_r|y]^{\lambda_c+k-c}(1+\beta x_r)^{n-k+c-1}\right)_{r\in S\atop{1\leq c\leq k}}\notag\\
&\ \ \ \ \ \cdot\det\left([x_r|y]^{n-k+m-c}(1+\beta x_r)^{c-1}\right)_{r\in \overline{S}\atop{1\leq c\leq n-k}}\notag\\[5pt]
&=\sum_{S\in \binom{[n]}{k}}(-1)^{nk-\binom{k}{2}+\sum_{r\in S}r}\det\left([x_r|y]^{\lambda_c+k-c}(1+\beta x_r)^{c-1}\right)_{r\in S\atop{1\leq c\leq k}}
\prod_{i\in S}(1+\beta x_i)^{n-k}\notag\\
&\ \ \ \ \ \ \cdot\det\left([x_r|y]^{n-k+m-c}(1+\beta x_r)^{c-1}\right)_{r\in \overline{S}\atop{1\leq c\leq n-k}}.
\end{align}

Let us first evaluate  the last factor in \eqref{numerator1}.
It is easy to see that
\begin{align*}
  \det\left([x_r|y]^{n-k+m-c}(1+\beta x_r)^{c-1}\right)_{r\in \overline{S}\atop{1\leq c\leq n-k}}
  =\prod_{j\in \overline{S}}[x_j|y]^m \cdot
  \det\left([x_r|y]_m^{n-k-c}(1+\beta x_r)^{c-1}\right)_{r\in \overline{S}\atop{1\leq c\leq n-k}},
\end{align*}
where, for $p\geq 0$ and $q\geq 0$,
\[[x_i|y]^q_p=(x_i\oplus y_{p+1})\cdots (x_i\oplus y_{p+q}).\]
Note that
\[\det\left([x_r|y]_m^{n-k-c}(1+\beta x_r)^{c-1}\right)_{r\in \overline{S}\atop{1\leq c\leq n-k}}\]
is obtained from
\begin{equation}\label{PQW}
\det\left([x_r|y]^{n-k-c}(1+\beta x_r)^{c-1}\right)_{r\in \overline{S}\atop{1\leq c\leq n-k}}
\end{equation}
by replacing $y_i$ with $y_{i+m}$ for $i\geq 1$.
By Lemma \ref{lem det}, \eqref{PQW} is  equal to \[\prod_{i<j\atop{i,j\in\overline{S}}}(x_i-x_j),\]
which is independent of the variables $y_i$,
and so we have
\begin{equation*}
\det\left([x_r|y]_m^{n-k-c}(1+\beta x_r)^{c-1}\right)_{r\in \overline{S}\atop{1\leq c\leq n-k}}=\prod_{i<j\atop{i,j\in\overline{S}}}(x_i-x_j).
\end{equation*}
Hence we obtain that
\begin{align}\label{GSA}
  \det\left([x_r|y]^{n-k+m-c}(1+\beta x_r)^{c-1}\right)_{r\in \overline{S}\atop{1\leq c\leq n-k}}
  =\prod_{j\in \overline{S}}[x_j|y]^m \cdot \prod_{i<j\atop{i,j\in\overline{S}}}(x_i-x_j).
\end{align}

As to the denominator in \eqref{frac grothendieck mu},
by \eqref{milne denumerator}, it follows that
\begin{align}
 \prod\limits_{1\leq i<j\leq n}(x_i-x_j)
 =&(-1)^{-\binom{k+1}{2}+\sum_{r\in S}r}\prod\limits_{{i<j\atop{i,j\in S}}}(x_i-x_j)
 \prod\limits_{{i<j\atop{i,j\in\overline{S}}}}(x_i-x_j)\prod\limits_{{i\in S\atop{j\in \overline{S}}}}(x_i-x_j)\notag\\[5pt]
  =&(-1)^{k(n-k)-\binom{k+1}{2}+\sum_{r\in S}r}\prod\limits_{{i<j\atop{i,j\in S}}}(x_i-x_j)
 \prod\limits_{{i<j\atop{i,j\in\overline{S}}}}(x_i-x_j)\prod\limits_{{i\in S\atop{j\in \overline{S}}}}(x_j-x_i)\notag\\[5pt]
 =&(-1)^{k(n-k)+\binom{k+1}{2}+\sum_{r\in S}r}\prod\limits_{{i<j\atop{i,j\in S}}}(x_i-x_j)
 \prod\limits_{{i<j\atop{i,j\in\overline{S}}}}(x_i-x_j)\prod\limits_{{i\in S\atop{j\in \overline{S}}}}(x_j-x_i)\notag\\[5pt]
  =&(-1)^{nk-\binom{k}{2}+\sum_{r\in S}r}\prod\limits_{{i<j\atop{i,j\in S}}}(x_i-x_j)
 \prod\limits_{{i<j\atop{i,j\in\overline{S}}}}(x_i-x_j)\prod\limits_{{i\in S\atop{j\in \overline{S}}}}(x_j-x_i).\label{denumerator}
\end{align}

Finally, notice that
\begin{equation}\label{FF}
G_\lambda(x_S|y)=
\frac{\det\left([x_r|y]^{\lambda_c+k-c}(1+\beta x_r)^{c-1}\right)_{r\in S\atop{1\leq c\leq k}}}
{\prod\limits_{{i<j\atop{i,j\in S}}}(x_i-x_j)}.
\end{equation}
Combining  \eqref{frac grothendieck mu}, \eqref{numerator1}, \eqref{GSA}, \eqref{denumerator} and \eqref{FF}, we arrive at \eqref{eq thm'},
as desired.
\end{proof}

Taking  $\lambda_1=\lambda_2=\cdots=\lambda_k=0$ and $m=k$ in Theorem \ref{combinat_thm}, we are led to
 another  generalization of the Good's identity.

\begin{cor}\label{cor good gern}
Let
$x=(x_1,\ldots,x_n)$ and
$y=(y_1,y_2,\ldots)$ be two sets of variables.  Then,
for $0\leq k\leq n$,  we have
  \begin{equation*}
  1=\sum_{S\in\binom{[n]}{k}}
   \frac{ \prod\limits_{i\in S}(1+\beta x_i)^{n-k}\prod\limits_{j\in \overline{S}} [x_j|y]^k}
{\prod\limits_{i\in S}\prod\limits_{j\in \overline{S}} (x_j-x_i)}.
  \end{equation*}
\end{cor}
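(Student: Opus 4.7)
The plan is to obtain Corollary~\ref{cor good gern} as a direct specialization of Theorem~\ref{combinat_thm}, by choosing $\lambda=(0,0,\ldots,0)$ (the zero partition of length $k$) together with $m=k$. Under this choice the hypothesis $\lambda_1\leq m-k$ becomes $0\leq 0$, which holds, and the partition $\mu$ appearing in Theorem~\ref{combinat_thm} collapses to
\[
\mu=(\underbrace{0,\ldots,0}_{n-k},\, 0,\ldots,0)=(0,\ldots,0).
\]

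Next I would verify that $G_{(0,\ldots,0)}(x|y)=1$ and, for every $S\in\binom{[n]}{k}$, that $G_{(0,\ldots,0)}(x_S|y)=1$. Both equalities are immediate from the tableau formula~\eqref{def fac combi}: the Young diagram of the zero partition is empty, so $\mathcal{T}((0,\ldots,0),n)$ consists of the single empty tableau, whose contribution is the empty product, namely $1$. (Alternatively, the same conclusion can be read off from the determinantal formula~\eqref{frac deter} by invoking Lemma~\ref{lem det}, since the numerator then coincides with the denominator.)

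Substituting $G_\mu(x|y)=1$ on the left-hand side of~\eqref{eq thm'} and $G_\lambda(x_S|y)=1$ in each summand on the right-hand side yields precisely the identity claimed in Corollary~\ref{cor good gern}. There is essentially no obstacle here: the statement is a routine specialization of the main theorem of Section~\ref{sec pf fbr12}, and the only small point that requires attention is the (immediate) evaluation of the empty-shape factorial Grothendieck polynomial.
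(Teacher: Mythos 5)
Your proposal is correct and follows exactly the paper's route: the paper obtains this corollary by setting $\lambda_1=\cdots=\lambda_k=0$ and $m=k$ in Theorem \ref{combinat_thm}, and your verification that the empty-shape factorial Grothendieck polynomial equals $1$ is the only detail the paper leaves implicit.
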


Note that Corollary \ref{cor good gern} specializes to
 identity \eqref{FII} in the case $k=n-1$.

\vspace{0.5cm}
 \noindent{\bf Acknowledgments.}
This work was supported by  the 973
Project and the National Science Foundation of China.

\end{document}